% ----------------------------------------------------------------
% AMS-LaTeX Paper ************************************************
% **** -----------------------------------------------------------
\documentclass[10pt,reqno]{amsart}
\usepackage{graphicx,amsmath,amssymb}
%\usepackage[notcite,notref]{showkeys}
% ----------------------------------------------------------------
\vfuzz2pt % Don't report over-full v-boxes if over-edge is small
\hfuzz2pt % Don't report over-full h-boxes if over-edge is small
% THEOREM Environments ---------------------------------------------------
\newtheorem{thm}{Theorem}[section]

\newtheorem{lem}[thm]{Lemma}

\theoremstyle{definition}

\theoremstyle{remark}
\newtheorem{rk}[thm]{Remark}

\numberwithin{equation}{section}

% ----------------------------------------------------------------
% ----------------------------------------------------------------
\begin{document}

\title[Inhomogeneous Strichartz estimates]{Inhomogeneous Strichartz estimates for Schr\"odinger's equation}

\author{Youngwoo Koh and Ihyeok Seo}

\address{School of Mathematics, Korea Institute for Advanced Study, Seoul 130-722, Republic of Korea}
\email{ywkoh@kias.re.kr}

\address{Department of Mathematics, Sungkyunkwan University, Suwon 440-746, Republic of Korea}
\email{ihseo@skku.edu}

\thanks{Ihyeok Seo was supported by the TJ Park Science Fellowship of POSCO TJ Park Foundation}
\subjclass[2010] {Primary 35B45, 35Q40}
\keywords{Strichartz estimates, Schr\"odinger equation}

\begin{abstract}
Foschi and Vilela in their independent works (\cite{F},\cite{V}) showed that
the range of $(1/r,1/\widetilde{r})$ for which the inhomogeneous Strichartz estimate
$
\big\|\int_{0}^{t}e^{i(t-s)\Delta}F(\cdot,s)ds\big\|_{L^{q}_tL^{r}_x} \lesssim \|F\|_{L^{\widetilde{q}'}_tL^{\widetilde{r}'}_x}
$
holds for some $q,\widetilde{q}$
is contained in the closed pentagon with vertices $A,B,B',P,P'$ except the points $P,P'$
(see Figure \ref{figure}).
We obtain the estimate for the corner points $P,P'$.
\end{abstract}
\maketitle
% ----------------------------------------------------------------
\section{Introduction}

In this paper we consider the following Cauchy problem for the Schr\"odinger equation:
    $$
    \begin{cases}
    i\partial_{t}u+\Delta u=F(x,t),\\
    u(x,0)=f(x),
    \end{cases}
    $$
where $(x,t)\in\mathbb{R}^n\times\mathbb{R}$, $n\geq1$.
By Duhamel's principle, we have the solution
    $$
    u(x,t)=e^{it\Delta}f(x)-i\int_{0}^{t}e^{i(t-s)\Delta}F(\cdot,s)ds,
    $$
where $e^{it\Delta}$ is the free Schr\"odinger propagator defined by
$$e^{it\Delta}f(x)=(2\pi)^{-n}\int_{\mathbb{R}^n}e^{ix\cdot\xi-it|\xi|^{2}}\widehat{f}(\xi)d\xi.$$

The Strichartz estimates for the solution play important roles
in the study of well-posedness for nonlinear Schr\"odinger equations ({\it cf. \cite{C,T}}).
They actually consist of two parts, homogeneous $(F=0)$ and inhomogeneous $(f=0)$ part.
The homogeneous Strichartz estimate
    $$
    \|e^{it\Delta}f\|_{L^q_tL^r_x} \lesssim \|f\|_{L^2}
    $$
holds if and only if $(r,q)$ is admissible pair, that is,
    $$
    r,q \geq 2,\quad(n,r,q)\neq(2,\infty,2)\quad\text{and}\quad n/r+2/q=n/2
    $$
(see \cite{St,GV,M,KT} and references therein).
But determining the optimal range of $(r,q)$ and $(\widetilde{r},\widetilde{q})$ for which
the inhomogeneous Strichartz estimate
\begin{equation}\label{1:44}
\bigg\|\int_{0}^{t}e^{i(t-s)\Delta}F(\cdot,s)ds\bigg\|_{L^{q}_tL^{r}_x} \lesssim \|F\|_{L^{\widetilde{q}'}_tL^{\widetilde{r}'}_x}
\end{equation}
holds is not completed yet when $n\geq3$.
It was observed that this estimate is valid on a wider range than what is given
by admissible pairs $(r,q)$, $(\widetilde{r},\widetilde{q})$ (see \cite{CW}, \cite{K}).
Foschi and Vilela in their independent works (\cite{F},\cite{V}) showed that
the range of $(1/r,1/\widetilde{r})$ for which \eqref{1:44} is valid for some $q,\widetilde{q}$
is contained in the closed pentagon with vertices $A,B,B',P,P'$ except the points $P,P'$
(see Figure \ref{figure}).
The aim of this paper is to obtain \eqref{1:44} for the points $P,P'$.
Our result is the following.

%%%%%%%%%%%%%%%%%%%%%%%%%%%%%%%%%%%%%%%%%%%%%%%%%%%%%%%%%%%%%%%%%%%%%%%%%%%%%%%%%%%%%%%%%%%%%%%%%%%%%%%%%%
\begin{figure}[t!]\label{figure}
\includegraphics[width=7.0cm]{./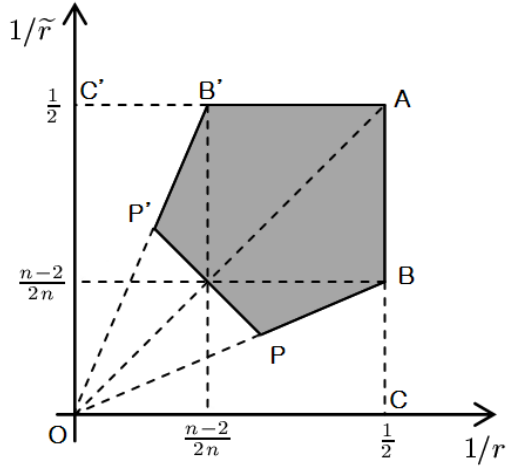}
\caption{The range of $(1/r,1/\widetilde{r})$ for \eqref{1:44} when $n\geq3$.
Here $P=(\frac{n-2}{2(n-1)},\frac{(n-2)^2}{2n(n-1)})$ and $P'=(\frac{(n-2)^2}{2n(n-1)},\frac{n-2}{2(n-1)})$.}
\end{figure}
%%%%%%%%%%%%%%%%%%%%%%%%%%%%%%%%%%%%%%%%%%%%%%%%%%%%%%%%%%%%%%%%%%%%%%%%%%%%%%%%%%%%%%%%%%%%%%%%%%%%%%%%%%

\begin{thm}\label{main_thm}
Let $n\geq3$. Then \eqref{1:44} holds when
$$(\frac{1}{r},\frac{1}{\widetilde{r}})=P=(\frac{n-2}{2(n-1)},\frac{(n-2)^2}{2n(n-1)})
\quad\text{if}\quad
\frac{n-2}{2(n-1)} \leq \frac{1}{q} = \frac{1}{\widetilde{q}'} < \frac{n}{2(n-1)},$$
and when
$$(\frac{1}{r},\frac{1}{\widetilde{r}})=P'=(\frac{(n-2)^2}{2n(n-1)},\frac{n-2}{2(n-1)})
\quad\text{if}\quad
\frac{n-2}{2(n-1)} < \frac{1}{q} = \frac{1}{\widetilde{q}'} \leq \frac{n}{2(n-1)}.$$
\end{thm}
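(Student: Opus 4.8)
The plan is to prove the estimate at $P$ only; the statement at $P'$ then follows by duality. Indeed, the adjoint of the retarded operator $\mathcal{T}F(x,t)=\int_0^t e^{i(t-s)\Delta}F(\cdot,s)(x)\,ds$ is the advanced operator $G\mapsto\int_t^\infty e^{i(t-s)\Delta}G(\cdot,s)\,ds$, which the substitutions $t\mapsto-t$, $G\mapsto\overline{G}$ convert back into a retarded operator with the roles of $(q,r)$ and $(\widetilde q,\widetilde r)$ interchanged; a short bookkeeping check shows that the $P$-estimate at $1/q=\alpha$ is equivalent to the $P'$-estimate at $1/q=1-\alpha$, and that the two $q$-ranges in the theorem correspond to each other under $\alpha\mapsto1-\alpha$. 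I would also note at the outset that, since $(1/r,1/\widetilde r)=P$ lies on the scaling line $1/r+1/\widetilde r=(n-2)/n$ and $1/q=1/\widetilde q'$, the estimate is exactly invariant under the parabolic rescaling $F(x,t)\mapsto F(\lambda x,\lambda^2t)$; there is therefore no $\varepsilon$ of room, and any dyadic decomposition will have to be summed on the nose.

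Next I would set up the argument in bilinear form, bounding $\langle\mathcal{T}F,G\rangle_{x,t}=\iint_{s<t}\langle e^{i(t-s)\Delta}F(\cdot,s),G(\cdot,t)\rangle_x\,ds\,dt$, and decompose dyadically according to the time separation, $\mathcal{T}=\sum_{j\in\mathbb Z}\mathcal{T}_j$ with $\mathcal{T}_j$ supported in $t-s\sim2^j$. On each block I would write $e^{i(t-s)\Delta}=e^{i(t-\sigma)\Delta}e^{i(\sigma-s)\Delta}$, use the dispersive bound $\|e^{i\tau\Delta}\|_{L^b\to L^c}\lesssim|\tau|^{-\frac n2(\frac1b-\frac1c)}$ ($1\le b\le2\le c\le\infty$) together with the homogeneous Strichartz estimates and their real-interpolated refinements $\|e^{it\Delta}f\|_{L^{q_0}_tL^{r_0,2}_x}\lesssim\|f\|_{L^2}$ for admissible $(q_0,r_0)$, $q_0>2$. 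This yields, for every $j$, a block estimate $\|\mathcal{T}_jF\|_{L^q_tL^{r,2}_x}\lesssim C_j\,\|F\|_{L^{\widetilde q'}_tL^{\widetilde r',2}_x}$ where $C_j$ carries an explicit power $2^{j\gamma}$ coming from Hölder's inequality in the $t$-variable; at $P$, and precisely because of the scale invariance noted above, one is forced to take $\gamma=0$, so $\sum_jC_j=\infty$.

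The heart of the matter is to replace this logarithmically divergent sum by a convergent one. For this I would run the bilinear real-interpolation (atomic decomposition) scheme of Keel--Tao's endpoint argument, but applied to the diagonal $2^j$-blocks of $\langle\mathcal{T}F,G\rangle$ rather than to $e^{it\Delta}$: one trades the $\ell^\infty$-type summation over $j$ for an $\ell^2$-type summation at the cost of passing from $L^r_x$ to the Lorentz space $L^{r,2}_x$ on the output (and from $L^{\widetilde r'}_x$ to $L^{\widetilde r',2}_x$ on the input). The crucial structural point is that the known obstruction to this scheme --- the \emph{failure} of the retarded estimate at the double endpoint $r=\widetilde r=2n/(n-2)$, which is exactly why generic corner points are excluded --- is absent at $P$, where $r=2(n-1)/(n-2)$ is \emph{strictly} less than $2n/(n-2)$ (and $\widetilde r$ strictly less than the corresponding forbidden value); this strict inequality provides the quantitative margin that lets the bilinear interpolation converge into $L^q_tL^{r,2}_x$, which embeds in $L^q_tL^r_x$ since $r>2$.

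I expect this convergence estimate --- verifying that the margin coming from $r=\tfrac{2(n-1)}{n-2}<\tfrac{2n}{n-2}$ is enough for the whole range $\tfrac{n-2}{2(n-1)}\le\tfrac1q<\tfrac n{2(n-1)}$ --- to be the main obstacle. The two endpoints of the $q$-range should be the most delicate: near $1/q=n/(2(n-1))$ one has $q<2$, so the time variable cannot be refined to $L^{q,2}_t$ and one must carry the Lorentz refinement entirely in the space variable, while at $1/q=(n-2)/(2(n-1))$, where $q=r$, one checks the scheme directly.
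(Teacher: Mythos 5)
Your skeleton coincides with the paper's: pass to the bilinear form, decompose dyadically in the time separation $t-s\sim 2^{j}$, prove block estimates of size $2^{j\beta}$ from the dispersive bound plus Strichartz, note that $\beta=0$ at $P$ by scaling, and then invoke a Keel--Tao type bilinear real interpolation (concretely, the atomic decomposition of $L^{p}$) to convert the divergent sum over $j$ into a convergent one. Your duality reduction of $P'$ to $P$ is correct and slightly cleaner than the paper's ``same argument'' remark. The problem is that the decisive step --- actually producing summability in $j$ --- is exactly what you defer as ``the main obstacle,'' and the mechanism you offer for why it should succeed is not the operative one. The margin is \emph{not} the strict inequality $r=\tfrac{2(n-1)}{n-2}<\tfrac{2n}{n-2}$; that inequality plays no role in the argument. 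The two hypotheses on $q$ in the theorem are what get used, each at a precise place, and without identifying them the scheme cannot be checked to close.

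What the paper actually does is the following. The block estimates take the form $|B_j(F,G)|\lesssim 2^{j\beta(a,b,q,\widetilde q)}\|F\|_{L^{\widetilde q'}_tL^{b'}_x}\|G\|_{L^{q'}_tL^{a'}_x}$ with $(q,\widetilde q)$ \emph{fixed} and the spatial exponents $(\tfrac1a,\tfrac1b)$ allowed to range over a small ball around $P$; the validity of the block estimate on the whole ball requires $\tfrac1{\widetilde q'}\le 1-\tfrac1a$ there, which is where the strict upper bound $\tfrac1q<\tfrac{n}{2(n-1)}$ enters. Crucially, $P$ lies in the relative interior of the segment $OB$ separating two regimes in which $\beta$ has different functional forms, namely $\tfrac1q+\tfrac1{\widetilde q}-\tfrac n2(1-\tfrac1a-\tfrac1b)$ on one side and $\tfrac1q+\tfrac1{\widetilde q}+\tfrac{n-1}{a}-\tfrac n2$ on the other. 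After decomposing $F_s$ and $G_t$ into spatial atoms indexed by $\widetilde k,k$, each triple $(j,\widetilde k,k)$ contributes a factor such as $2^{(\widetilde k-\frac n2 j)(\frac1{\widetilde r}-\frac1b)+(k-\frac n2 j)(\frac1r-\frac1a)}$, and one chooses $(\tfrac1a,\tfrac1b)$ in an annulus around $P$ with the signs of $\tfrac1{\widetilde r}-\tfrac1b$ and $\tfrac1r-\tfrac1a$ adapted to the signs of $\widetilde k-\tfrac n2 j$ and $k-\tfrac n2 j$ so as to extract decay $2^{-\epsilon(|\widetilde k-\frac n2 j|+|k-\frac n2 j|)}$; the case $\widetilde k<\tfrac n2 j<k$ cannot be handled in the first regime (the required signs push $(\tfrac1a,\tfrac1b)$ out of it) and forces the switch to the second, which is why the two-regime structure at $P$ is essential. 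Finally, resumming the atoms back into $\|F\|_{L^{\widetilde q'}_tL^{\widetilde r'}_x}\|G\|_{L^{q'}_tL^{r'}_x}$ requires $\widetilde q'\ge\widetilde r'$ and $q'\ge r'$ via Minkowski, and the latter is exactly the lower endpoint $\tfrac1q\ge\tfrac{n-2}{2(n-1)}$. Your concern about $q<2$ and Lorentz refinement in time is therefore a red herring: the interpolation is performed entirely in the spatial exponents, and no time-Lorentz space is needed anywhere.
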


\begin{rk}
Since $1/r+1/\widetilde{r}=(n-2)/n$, the condition $q=\widetilde{q}'$ follows from
the scaling condition
\begin{equation}\label{sca}
\frac1q+\frac1{\widetilde{q}}+\frac n2(\frac1r+\frac1{\widetilde{r}})=\frac n2.
\end{equation}
The conditions $1/q<n/2(n-1)$ and $(n-2)/2(n-1)<1/\widetilde{q}'$
when $(1/r,1/\widetilde{r})=P\,\, \text{and}\,\, P'$ correspond to
the known necessary conditions (\cite{F,V})
    $$
    \frac1q<\frac n2(1-\frac2r)\quad\text{and}\quad\frac1q<\frac n2(1-\frac2r),
    $$
respectively.
In Section \ref{sec3} we also give new necessary conditions for \eqref{1:44}.
\end{rk}

\begin{rk}
Our proof can be easily modified to cover the range of $(r,q)$ and $(\widetilde{r},\widetilde{q})$ obtained by Foschi and Vilela.
But we have chosen to present the proof only for the points $P,P'$ to keep the exposition as simple as possible.
The case $n\geq4$ in Theorem \ref{main_thm} was already shown in \cite{Ko} but the argument there does not suffice to obtain the same result in dimension $n=3$.
\end{rk}

\section{Proof of Theorem \ref{main_thm}}\label{sec2}

Under the same conditions in Theorem \ref{main_thm}, we will show
    \begin{equation}\label{1:7}
    \bigg\|\int_{-\infty}^{t}e^{i(t-s)\Delta}F(\cdot,s)ds\bigg\|_{L^{q}_tL^{r}_x}
    \lesssim \|F\|_{L^{\widetilde{q}'}_tL^{\widetilde{r}'}_x}
    \end{equation}
which implies \eqref{1:44}.
Indeed, to obtain \eqref{1:44} from \eqref{1:7},
first decompose the $L_t^q$ norm in the left-hand side of \eqref{1:44}
into two parts, $t\geq0$ and $t<0$. Then the latter can be reduced to the former
by changing the variable $t\mapsto-t$, and so it is only needed to consider the first part $t\geq0$.
But, since $[0,t)=(-\infty,t)\cap[0,\infty)$, applying \eqref{1:7} with $F$ replaced by $\chi_{[0,\infty)}(s)F$,
one can bound the first part as desired.

Let $\phi$ be a cut-off function with $\phi(\xi)=1$ if $|\xi|\leq1$, $\phi(\xi)=0$ if $|\xi|>2$, and $0\leq\phi(\xi)\leq1$.
Then it is enough to show that
\begin{equation}\label{135}
\bigg\|\int_{-\infty}^t\int_{\mathbb{R}^n}e^{ix\cdot\xi-i(t-s)|\xi|^2}|\phi(\xi)|^2\widehat{F(\cdot,s)}(\xi)d\xi ds\bigg\|_{L^{q}_tL^{r}_x}
\lesssim \|F\|_{L^{\widetilde{q}'}_tL^{\widetilde{r}'}_x}.
\end{equation}
Once we have this estimate, by the usual scaling we see that for all $j\geq0$
\begin{align*}
\bigg\|\int_{-\infty}^t\int_{\mathbb{R}^n}e^{ix\cdot\xi-i(t-s)|\xi|^2}|\phi(\xi/2^j)|^2&\widehat{F(\cdot,s)}(\xi)d\xi ds\bigg\|_{L^{q}_tL^{r}_x}\\
&\lesssim 2^{-2j(\frac1q+\frac1{\widetilde{q}}-\frac n2(1-\frac1r-\frac1{\widetilde{r}}))}\|F\|_{L^{\widetilde{q}'}_tL^{\widetilde{r}'}_x}\\
&\lesssim \|F\|_{L^{\widetilde{q}'}_tL^{\widetilde{r}'}_x}.
 \end{align*}
Here, for the last inequality, we used the scaling condition \eqref{sca}.
Since we may assume that $F$ is contained in the Schwartz space on $\mathbb{R}^{n+1}$, by a limiting argument ($j\rightarrow\infty$),
we now get \eqref{1:7} from the above estimate.

Now, for fixed $t$, we define
    $$
    T_{t}f(x)
    = \int e^{ix\cdot\xi-it|\xi|^{2}}\phi(\xi)\widehat{f}(\xi)d\xi
    $$
and note that its adjoint operator $T^*_t$ is given by
    $$
    T^*_{t}f(x)=\int e^{ix\cdot\xi+it|\xi|^{2}} \overline{\phi(\xi)} \widehat{f}(\xi)d\xi.
    $$
Then the desired estimate \eqref{135} can be rewritten as
    $$
    \bigg\| \int_{-\infty}^{t} T_t T_s^* F_sds\bigg\|_{L^{q}_tL^{r}_x}
    \lesssim \|F\|_{L^{\widetilde{q}'}_tL^{\widetilde{r}'}_x}
    $$
where we use the notation $F_s$ to denote $F_s(\cdot)=F(\cdot,s)$.
By duality we are now reduced to showing the bilinear form estimate
    \begin{equation}\label{2:1}
    \bigg| \int_{\mathbb{R}} \int_{-\infty}^{t} \langle T^*_s F_s,T^*_t G_t \rangle_{L^2_x} dsdt \bigg|
    \lesssim \|F\|_{L^{\widetilde{q}'}_tL^{\widetilde{r}'}_x}  \|G\|_{L^{q'}_tL^{r'}_x}
    \end{equation}
under the same conditions in Theorem \ref{main_thm}.
To show \eqref{2:1}, we will use the following lemma.

\begin{lem}\label{main_lemma}
Let $n\geq3$, and let $ 2\leq r,\widetilde{r}\leq\infty$ and $1 \leq q, \widetilde{q} \leq \infty$.
Define
    $$
    B_j(F,G)
    =\int_{\mathbb{R}} \int_{t-2^{j+1}}^{t-2^{j}} \langle T_s^* F_s ,T_t^* G_t \rangle_{L_x^2} dsdt
    $$
and assume one of the following conditions for $(r,\widetilde{r};q,\widetilde{q})$:
 $$
    \begin{aligned}
    i)\quad   \frac{1}{r}  \leq \frac{n-2}{n}\frac{1}{\widetilde{r}}
        &\quad\mbox{and}\quad
        \frac{1}{\widetilde{r}} \leq  \frac{1}{q}  \leq  \frac{1}{\widetilde{q}'}  \leq  1  ,\\
    ii)\quad   \frac{n-2}{n}\frac{1}{\widetilde{r}}  \leq \frac{1}{r}  \leq  \frac{1}{\widetilde{r}}
        &\quad\mbox{and}\quad
         -\frac{n}{2}(\frac{1}{r} - \frac{1}{\widetilde{r}}) \leq \frac{1}{q}  \leq \frac{1}{\widetilde{q}'} \leq 1 ,\\
    iii)\quad   \frac{1}{\widetilde{r}}  \leq  \frac{1}{r}  \leq  \frac{n}{n-2}\frac{1}{\widetilde{r}}
        &\quad\mbox{and}\quad
          0  \leq \frac{1}{q} \leq  \frac{1}{\widetilde{q}'} \leq  1 -\frac{n}{2}(\frac{1}{r} - \frac{1}{\widetilde{r}}) ,\\
    iv)\quad   \frac{n}{n-2}\frac{1}{\widetilde{r}} \leq  \frac{1}{r}
        &\quad\mbox{and}\quad
         0 \leq  \frac{1}{q} \leq \frac{1}{\widetilde{q}'}  \leq  1 - \frac{1}{r}.
    \end{aligned}
    $$
Then we have
    \begin{equation}\label{2:6}
    |B_j(F,G)| \lesssim 2^{j \beta(r,\widetilde{r},q,\widetilde{q})}
    \|F\|_{L_t^{\widetilde{q}'}L_x^{\widetilde{r}'}}  \|G\|_{L_t^{q'}L_x^{r'}},
    \end{equation}
where
    $$
    \beta(r,\widetilde{r},q,\widetilde{q})=\left\{\begin{aligned}
    &\frac{1}{q}+\frac{1}{\widetilde{q}}+ \frac{n-1}{\widetilde{r}} -\frac{n}{2} \quad&&\mbox{if\,  i) holds},\\
    &\frac{1}{q}+\frac{1}{\widetilde{q}}-\frac{n}{2}(1-\frac{1}{r}-\frac{1}{\widetilde{r}})\quad&&\mbox{if\,  ii) or iii) holds},\\
    &\frac{1}{q}+\frac{1}{\widetilde{q}}+ \frac{n-1}{r} -\frac{n}{2} \quad&&\mbox{if\,  iv) holds}.
    \end{aligned}\right.
    $$
\end{lem}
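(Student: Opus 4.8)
The plan is to estimate the single dyadic piece $B_j(F,G)$ by interpolating between two types of bounds: a "dispersive" $L^1 \to L^\infty$ estimate for $T_t T_s^*$ valid when $|t-s| \sim 2^j$, and an "energy" $L^2 \to L^2$ estimate. Recall that for the truncated propagator one has $\|T_t T_s^* g\|_{L^\infty_x} \lesssim |t-s|^{-n/2}\|g\|_{L^1_x}$ (from stationary phase / the explicit kernel, using the cutoff $\phi$ only to make everything finite) together with $\|T_t T_s^* g\|_{L^2_x} \lesssim \|g\|_{L^2_x}$. Complex interpolation between these gives, for each fixed pair $(s,t)$ with $t-2^{j+1}\le s\le t-2^j$,
\begin{equation*}
\|T_t T_s^* g\|_{L^{r}_x} \lesssim 2^{-jn(\frac12-\frac1r)}\|g\|_{L^{r'}_x},
\qquad 2\le r\le\infty,
\end{equation*}
and more generally an off-diagonal version $\|T_tT_s^*g\|_{L^r_x}\lesssim 2^{-j\sigma}\|g\|_{L^{\widetilde r'}_x}$ with the scaling-forced exponent $\sigma=\frac n2(1-\frac1r-\frac1{\widetilde r})$, valid in a suitable range of $(r,\widetilde r)$ by interpolating the dispersive bound against a Sobolev-type (Hardy–Littlewood–Sobolev) estimate that allows $r\ne\widetilde r$. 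These pointwise-in-$(s,t)$ bounds, fed into $B_j$ via Cauchy–Schwarz in $x$, reduce matters to a one-dimensional integral in the $s,t$ variables.

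Next I would carry out the time integration. After applying the spatial bound, $|B_j(F,G)|$ is controlled by
\begin{equation*}
\int_{\mathbb R}\int_{t-2^{j+1}}^{t-2^j} 2^{-j\sigma}\,\|F_s\|_{L^{\widetilde r'}_x}\,\|G_t\|_{L^{r'}_x}\,ds\,dt,
\end{equation*}
which, since the inner integration is over an interval of length $2^j$, is a convolution of $\|F_\cdot\|_{L^{\widetilde r'}_x}$ and $\|G_\cdot\|_{L^{r'}_x}$ against $2^{-j\sigma}\chi_{[2^j,2^{j+1}]}$. By Young's inequality (or Hölder when exponents are at the endpoints) this is bounded by
\begin{equation*}
2^{-j\sigma}\,2^{j(1+\frac1q+\frac1{\widetilde q}-1)}\,\|F\|_{L^{\widetilde q'}_tL^{\widetilde r'}_x}\,\|G\|_{L^{q'}_tL^{r'}_x}
= 2^{j(\frac1q+\frac1{\widetilde q}-\sigma)}\,\|F\|_{L^{\widetilde q'}_tL^{\widetilde r'}_x}\,\|G\|_{L^{q'}_tL^{r'}_x},
\end{equation*}
which is exactly the exponent $\beta$ in cases ii) and iii). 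The conditions $1/q\le 1/\widetilde q'$ and the lower bound on $1/q$ in those cases are precisely what makes the relevant Young/Hölder inequality applicable on the half-line of time. For cases i) and iv), the spatial exponent improves beyond scaling: here one does not use the full dispersive decay but rather interpolates the $L^2$ bound against a fixed-time estimate that trades integrability for the sharper decay rate $\frac{n-1}{\widetilde r}$ (resp. $\frac{n-1}{r}$); the extra room in the constraints ($1/r\le\frac{n-2}{n}\frac1{\widetilde r}$, etc.) is what licenses this, and then the same convolution-in-time step yields the stated $\beta$.

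The main obstacle I anticipate is organizing the case analysis so that in each of the four regions one has the correct combination of (a) a valid fixed-time mapping $L^{\widetilde r'}_x\to L^r_x$ for $T_tT_s^*$ with the claimed decay, and (b) a valid time-integration inequality on $(-\infty,t)$ restricted to $|t-s|\sim 2^j$. These two requirements pull the parameters in opposite directions, and the four cases i)–iv) are exactly the maximal sub-regions where both can be satisfied simultaneously; verifying the boundary exponents (where Young's inequality degenerates to Hölder, or where the dispersive interpolation hits an endpoint such as $r=2$ or $r=\infty$) is the delicate bookkeeping. A secondary technical point is justifying the fixed-time $L^{r'}\to L^r$ bounds with the non-scaling decay in cases i) and iv): these come from combining the kernel bound $|K_{t-s}(x-y)|\lesssim |t-s|^{-n/2}$ with the compact frequency support, which on the region $|t-s|\sim 2^j$ gives an effectively smoothing operator whose mapping properties can be read off by Young's convolution inequality in $x$; care is needed because the estimate is only claimed for the truncated propagator, so the cutoff $\phi$ must be kept throughout.
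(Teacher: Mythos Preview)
Your plan is sound on the diagonal $r=\widetilde r$ (the segment $OA$): there the fixed-time dispersive bound $\|T_tT_s^*g\|_{L^r_x}\lesssim 2^{-jn(\frac12-\frac1r)}\|g\|_{L^{r'}_x}$ plus Young/H\"older in $t$ gives exactly the exponent $\beta$ in cases ii)/iii). The gap is in the off-diagonal step. The fixed-time bound
\[
\|T_tT_s^*g\|_{L^r_x}\ \lesssim\ 2^{-j\cdot\frac n2(1-\frac1r-\frac1{\widetilde r})}\,\|g\|_{L^{\widetilde r'}_x}
\]
that you invoke simply does not hold for $r\ne\widetilde r$, even for the frequency-truncated propagator. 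Interpolating $L^1\!\to\! L^\infty$ against $L^2\!\to\! L^2$ gives only the diagonal family $L^{r'}\!\to\! L^r$; moving off-diagonal by Bernstein (which is all the cutoff $\phi$ buys at unit frequency) carries no $\tau$-dependence, so the best available decay is $\sigma=n(\tfrac12-\max(\tfrac1r,\tfrac1{\widetilde r}))$, strictly smaller than the scaling exponent $\tfrac n2(1-\tfrac1r-\tfrac1{\widetilde r})$ whenever $r\ne\widetilde r$. Your alternative route via Young in $x$ with the kernel bound $|K_\tau|\lesssim|\tau|^{-n/2}$ is even worse: $\|K_\tau\|_{L^p}\sim|\tau|^{n/p-n/2}$ for large $\tau$, giving $\sigma=\tfrac n2-n(\tfrac1r+\tfrac1{\widetilde r})$. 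Neither produces the $\beta$ claimed in ii)/iii) off the diagonal, nor the exponents $\tfrac n2-\tfrac{n-1}{r}$, $\tfrac n2-\tfrac{n-1}{\widetilde r}$ in i)/iv).

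What the paper actually does is prove \eqref{2:6} at four extreme configurations---the points $O$ ($r=\widetilde r=\infty$), $A$ ($r=\widetilde r=2$), and the segments $BC$ ($r=2$, $\widetilde r\ge\tfrac{2n}{n-2}$) and $B'C'$ (dual)---and then interpolate. The segments $BC$, $B'C'$ are exactly where the missing decay is recovered, and the mechanism is \emph{not} a fixed-time bound: one chops time into intervals of length $2^j$, observes that $B_j$ becomes an almost-diagonal sum over these blocks, and applies to each block the dual of the Keel--Tao endpoint Strichartz estimate $\|T_tf\|_{L^2_tL^{\widetilde r}_x}\lesssim\|f\|_{L^2_x}$. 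This space-time smoothing contributes the extra factor $2^{-j/2}$ on $BC$ that fixed-time estimates cannot see, and interpolation with $O$ and $A$ then yields the stated $\beta$ throughout the four regions.
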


\begin{rk}\label{lr}
The ranges of $(1/r,1/\widetilde{r}')$ in $i)$, $ii)$, $iii)$ and $iv)$ correspond to the triangular regions $OB'C'$, $OAB'$, $OAB$ and $OBC$
in Figure \ref{figure}, respectively.
\end{rk}

\begin{proof}[Proof of Lemma \ref{main_lemma}]
One can easily get the above lemma by interpolating the estimates \eqref{2:6} in the following four cases:
\begin{itemize}
\item[(a)] $r=\widetilde{r}=\infty$ (point $O$) \, and \, $1 \leq \widetilde{q}' \leq q \leq \infty$,

\smallskip

\item[(b)] $r=\widetilde{r}=2$ (point $A$) \, and \, $1 \leq \widetilde{q}' \leq q \leq \infty$,

\smallskip

\item[(c)] $r=2$, $\frac{2n}{n-2} \leq\widetilde{r}\leq \infty$ (segment $BC$) \, and \, $2 \leq \widetilde{q}' \leq q \leq \infty$,

\smallskip

\item[(d)] $\frac{2n}{n-2} \leq r\leq \infty$, $\widetilde{r}=2$ (segment $B'C'$) \, and \, $1 \leq \widetilde{q}' \leq q \leq 2$.
\end{itemize}

The first and second ones, $(a)$ and $(b)$, were already shown in \cite{Ko} (see Lemma 2.1 there).
So we only need to show $(c)$ and $(d)$.
For $(c)$ we decompose $F$ and $G$ as
 $$
    F^{k}(x,s)=F(x,s) \chi_{\{2^j k \leq s < 2^j (k+1)\}}(s)\quad\text{and}\quad
    G^{k}(x,t)=G(x,t) \chi_{\{2^j k \leq t < 2^j (k+1)\}}(t)
    $$
for fixed $j$.
Then we see that
    $$
    \begin{aligned}
    |B_j(F,G)|
    &= \sum_{k\in\mathbb{Z}} \int_{\mathbb{R}} \int_{t-2^{j+1}}^{t-2^{j}} \langle T_s^* F_{s}^k ,T_t^* G_{t} \rangle_{L_x^2} dsdt\\
    &\leq \sum_{k\in\mathbb{Z}}\int_{\mathbb{R}} \int_{\mathbb{R}} \langle T_s^* F_{s}^k ,T_t^* G_{t}^{k+1} \rangle_{L_x^2} dsdt
        + \sum_{k\in\mathbb{Z}}\int_{\mathbb{R}} \int_{\mathbb{R}} \langle T_s^* F_{s}^k ,T_t^* G_{t}^{k+2} \rangle_{L_x^2} dsdt
    \end{aligned}
    $$
because $t \in (s+2^{j}, s+2^{j+1})$.
Using H\"older's inequality in $x$, we note that
    $$
    \bigg| \sum_{k\in\mathbb{Z}} \int_{\mathbb{R}} \int_{\mathbb{R}} \langle T_s^* F_{s}^k ,T_t^* G_{t}^{k+1} \rangle_{L_x^2} dsdt\bigg|
    \leq
    \sum_{k \in \mathbb{Z}} \bigg\| \int_{\mathbb{R}} T_s^*F_{s}^{k} ds \bigg\|_{L_x^2}
    \bigg\|\int_{\mathbb{R}}T_t^*G_{t}^{k+1}dt\bigg\|_{L_x^2}.
    $$
We also note that
    \begin{equation}\label{abs}
    \| T_t f \|_{L_t^q L_x^r} \lesssim  \|f\|_{L^2}
    \end{equation}
holds for $r,q \geq 2$ and $n/r+2/q\leq n/2$.
Indeed, by the stationary phase method (see p. 344 in \cite{S}), we see $\| T_t f \|_{L_x^\infty} \lesssim(1+|t|)^{-n/2}\|f\|_{L^1}$.
Then \eqref{abs} follows directly from the abstract Strichartz estimates of Keel and Tao \cite{KT}.
Using the dual estimate of \eqref{abs},
    $$
    \bigg\| \int_{\mathbb{R}} T_t^*F_{s} ds \bigg\|_{L_x^2}
    \lesssim \|F\|_{L^{q'}_{t}L^{r'}_x},
    $$
we now get
    $$
    \bigg| \sum_{k\in\mathbb{Z}} \int_{\mathbb{R}} \int_{\mathbb{R}} \langle T_s^* F_{s}^k ,T_t^* G_{t}^{k+1} \rangle_{L_x^2} dsdt\bigg|
    \lesssim
    \sum_{k \in \mathbb{Z}}  \|F^{k}\|_{L^{2}_{t}L^{\widetilde{r}'}_x} \|G^{k+1}\|_{L^{1}_{t}L^2_x}
    $$
where $\frac{2n}{n-2} \leq \widetilde{r} \leq \infty$.
On the other hand, by H\"older's inequality in time, it follows that
    $$
    \begin{aligned}
    \sum_{k \in \mathbb{Z}}  \|F^{k}\|_{L^{2}_{t}L^{\widetilde{r}'}_x} \|G^{k+1}\|_{L^{1}_{t}L^2_x}
    &\leq
    \sum_{k \in \mathbb{Z}} 2^{j(\frac{1}{q}+\frac{1}{\widetilde{q}}-\frac{1}{2})}
        \|F^k\|_{L^{\widetilde{q}'}_{t}L^{\widetilde{r}'}_x} \|G^{k+1}\|_{L^{q'}_{t}L^2_x}\\
      &\leq
    2^{j(\frac{1}{q}+\frac{1}{\widetilde{q}}-\frac{1}{2})}\|F\|_{L^{\widetilde{q}'}_{t}L^{\widetilde{r}'}_x} \|G\|_{L^{q'}_{t}L^{2}_x}
    \end{aligned}
    $$
if $2 \leq \widetilde{q}' \leq q \leq \infty$.
Here, for the last inequality we used that
    \begin{equation}\label{2:7}
    \sum_{n} |A_n B_n| \leq  \Big( \sum_{n} |A_n|^p \Big)^{\frac{1}{p}} \Big( \sum_{n} |B_n|^{\widetilde{p}} \Big)^{\frac{1}{\widetilde{p}}}
    \quad\mbox{if}\quad \frac{1}{p}+\frac{1}{\widetilde{p}} \geq 1.
    \end{equation}
Hence,
 $$
    \bigg| \sum_{k\in\mathbb{Z}} \int_{\mathbb{R}} \int_{\mathbb{R}} \langle T_s^* F_{s}^k ,T_t^* G_{t}^{k+1} \rangle_{L_x^2} dsdt\bigg|
    \leq
    2^{j(\frac{1}{q}+\frac{1}{\widetilde{q}}-\frac{1}{2})}\|F\|_{L^{\widetilde{q}'}_{t}L^{\widetilde{r}'}_x} \|G\|_{L^{q'}_{t}L^{2}_x}.
    $$
Similarly,
 $$
    \bigg|  \sum_{k\in\mathbb{Z}} \int_{\mathbb{R}} \int_{\mathbb{R}} \langle T_s^* F_{s}^k ,T_t^* G_{t}^{k+2} \rangle_{L_x^2} dsdt\bigg|
    \leq
    2^{j(\frac{1}{q}+\frac{1}{\widetilde{q}}-\frac{1}{2})}\|F\|_{L^{\widetilde{q}'}_{t}L^{\widetilde{r}'}_x} \|G\|_{L^{q'}_{t}L^{2}_x}.
   $$
Consequently, we get $(c)$.
The case $(d)$ can be shown in a similar way as $(c)$.
\end{proof}

Now we return to \eqref{2:1}. It suffices to show that
 \begin{equation}\label{2:00}
    \sum_{j \in \mathbb{Z}} |B_j(F,G)|
    \lesssim \|F\|_{L^{\widetilde{q}'}_tL^{\widetilde{r}'}_x}  \|G\|_{L^{q'}_tL^{r'}_x}.
    \end{equation}
We only consider the case $(1/r,1/\widetilde{r})=P$ since the case $(1/r,1/\widetilde{r})=P'$ follows from the same argument.
Now let
$$(\frac{1}{r},\frac{1}{\widetilde{r}})=(\frac{n-2}{2(n-1)},\frac{(n-2)^2}{2n(n-1)})=P
\quad\text{and}\quad
\frac{n-2}{2(n-1)} \leq \frac{1}{q} = \frac{1}{\widetilde{q}'} < \frac{n}{2(n-1)}.
$$
Note that the point $P$ lies on the segment $OB$, and so we will use Lemma \ref{main_lemma}
under the conditions $iii)$ and $iv)$ (see Figure \ref{figure} and Remark \ref{lr}).
Since $1 -\frac{n}{2}(\frac{1}{r} - \frac{1}{\widetilde{r}})=1-\frac1r=\frac n{2(n-1)}> \frac{1}{\widetilde{q}'}$,
if we choose $\epsilon>0$ small enough so that $\epsilon\leq\frac{1}{20}(\frac n{2(n-1)}-\frac{1}{\widetilde{q}'})$,
we can use Lemma \ref{main_lemma} under the conditions $iii)$ and $iv)$ for $(a,b;q,\widetilde{q})$ with all $(a,b) \in B((\frac{1}{r},\frac{1}{\widetilde{r}}),10\epsilon)$,
where
    $$
    B\big((\frac{1}{r},\frac{1}{\widetilde{r}}),10\epsilon \big)= \{(\frac{1}{a},\frac{1}{b})\in[0,\frac12]\times[0,\frac12]\,:\,|\frac{1}{r}-\frac{1}{a}|, |\frac{1}{\widetilde{r}}-\frac{1}{b}| < 10\epsilon \}.
    $$
Now, using Lemma \ref{main_lemma}, we see that
 \begin{equation}\label{sese}
    |B_j(F,G)|
    \lesssim 2^{j \beta(a,b,q,\widetilde{q})}
        \|F\|_{L_t^{\widetilde{q}'}L_x^{b'}}  \|G\|_{L_t^{q'}L_x^{a'}},
    \end{equation}
 where
    $$
    \beta(a,b,q,\widetilde{q})=\left\{\begin{aligned}
    &\frac{1}{q}+\frac{1}{\widetilde{q}}-\frac{n}{2}(1-\frac{1}{a}-\frac{1}{b})\quad\text{if}\quad \frac{1}{a}\leq\frac{n}{n-2}\frac{1}{b},\\
    &\frac{1}{q}+\frac{1}{\widetilde{q}}+ \frac{n-1}{a} -\frac{n}{2}\quad\text{if}\quad \frac{1}{a}>\frac{n}{n-2}\frac{1}{b}.
    \end{aligned}\right.
    $$
Next we decompose $F$ and $G$ using the following lemma whose proof can be found in \cite{KT} (see Lemma 5.1 there):

\begin{lem}[Atomic decomposition of $L^{p}$]\label{KT_lemma}
Let $1 \leq p<\infty$. Then any $f\in L_{x}^{p}$ can be written as
    $$
    f= \sum_{k=-\infty}^{\infty} c_{k}\chi_{k}
    $$
where each $\chi_{k}$ is a function bounded by $O(2^{-k/p})$
and supported on a set of measure $O(2^{k})$
and the $c_{k}$ are non-negative constants with $\|c_{k}\|_{l^{p}} \lesssim \|f\|_{L^p}$.
\end{lem}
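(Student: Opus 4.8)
The plan is to prove the lemma by a \emph{distribution‑function (``layer cake'') decomposition of $f$ according to its size}, organised so that the level sets are \emph{forced} to have the prescribed measures $2^{k}$. The point is that one should not cut $|f|$ at the heights $2^{k}$ and then read off the measures of the resulting level sets — those measures are uncontrolled — but instead cut at heights $\mu_{k}$ \emph{chosen so that the level set has measure exactly $2^{k}$}. With this choice the two normalisations demanded by the lemma, namely the pointwise bound $2^{-k/p}$ and the support measure $2^{k}$, become compatible automatically, and one obtains exactly one atom for each $k\in\mathbb{Z}$.

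Concretely I would argue as follows. Since the $\chi_{k}$ may be complex‑valued there is no need to split $f$ into sign or real/imaginary parts; I work with the distribution function $d_{f}(\lambda)=|\{x:|f(x)|>\lambda\}|$, which is non‑increasing, right‑continuous, tends to $0$ as $\lambda\to\infty$, and — since $\lambda^{p}d_{f}(\lambda)\le\|f\|_{L^{p}}^{p}$ — is finite for every $\lambda>0$. For each integer $k$ with $2^{k}<|\{f\neq0\}|$ put $\mu_{k}=\inf\{\lambda>0:d_{f}(\lambda)\le2^{k}\}$, so that $\mu_{k}$ is non‑increasing in $k$, and choose a measurable set $G_{k}$ with $\{|f|>\mu_{k}\}\subseteq G_{k}\subseteq\{|f|\ge\mu_{k}\}$ and $|G_{k}|=2^{k}$; a short measure‑theoretic check shows these can be chosen nested, $G_{k}\subseteq G_{k+1}$. (If $|\{f\neq0\}|<\infty$, for the remaining $k$ simply take $G_{k}=\{f\neq0\}$.) Put $D_{k}=G_{k+1}\setminus G_{k}$. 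Then the $D_{k}$ are pairwise disjoint, $|D_{k}|=2^{k}$ except for a single ``leftover'' index where it is merely $<2^{k}$, their union is $\{f\neq0\}$ up to a null set, and on $D_{k}$ one has $|f|\le\mu_{k}$; in particular $|f|\ge\mu_{k}$ on $D_{k-1}$.

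Now set $c_{k}=\mu_{k}2^{k/p}\ge0$ and $\chi_{k}=c_{k}^{-1}f\chi_{D_{k}}$ (and $c_{k}=\chi_{k}=0$ when $D_{k}$ is empty). Then $f=\sum_{k}c_{k}\chi_{k}$ a.e.; the pointwise bound $|\chi_{k}|\le\mu_{k}/c_{k}=2^{-k/p}$ holds because $|f|\le\mu_{k}$ on $D_{k}$; and $\operatorname{supp}\chi_{k}\subseteq D_{k}$ has measure $\le2^{k}$. For the coefficient bound I would \emph{not} estimate $c_{k}^{p}=\mu_{k}^{p}2^{k}$ by $\int_{D_{k}}|f|^{p}$ — on $D_{k}$ the size of $|f|$ is only bounded above — but rather by the integral over the \emph{neighbouring} annulus $D_{k-1}$, which is a genuine one of measure $2^{k-1}$ whenever $c_{k}\neq0$ and on which $|f|\ge\mu_{k}$:
$$ c_{k}^{p}=\mu_{k}^{p}2^{k}=2\,\mu_{k}^{p}\,|D_{k-1}|\le 2\int_{D_{k-1}}|f|^{p}. $$
Since the $D_{k-1}$ are pairwise disjoint with union $\{f\neq0\}$ up to a null set, summing over $k\in\mathbb{Z}$ gives $\sum_{k}c_{k}^{p}\le2\|f\|_{L^{p}}^{p}$, i.e.\ $\|c_{k}\|_{\ell^{p}}\lesssim\|f\|_{L^{p}}$, which completes the proof; the same ``borrow from the adjacent annulus'' step also takes care of the leftover index.

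I do not anticipate a genuine obstacle — this is a soft, purely measure‑theoretic statement — and the one place requiring a clear head is exactly the coefficient estimate: one must resist estimating $c_{k}$ on the support of $\chi_{k}$ itself and instead pick up the needed lower bound on $|f|$ from the previous annulus. The remaining points (selecting $\mu_{k}$ and a nested family $G_{k}$ when $d_{f}$ has jumps — include a measurable fraction of the level set $\{|f|=\mu_{k}\}$ — and the bookkeeping when $|\{f\neq0\}|<\infty$) are routine and I would dispatch them quickly.
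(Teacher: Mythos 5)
Your proof is correct. The paper does not prove this lemma at all---it simply cites Lemma 5.1 of Keel--Tao---and your argument is essentially the standard one from that source: cut $f$ by the measure of its level sets (equivalently, via the decreasing rearrangement at the dyadic values $2^k$) rather than by dyadic heights, and recover the $\ell^p$ bound on the coefficients by comparing $\mu_k^p\,2^k$ with $\int_{D_{k-1}}|f|^p$ over the adjacent annulus, where $|f|\ge\mu_k$. The details you flag as routine (nesting the sets $G_k$ when the distribution function jumps, and the finite-measure leftover index) are indeed routine and handled correctly.
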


By this lemma, we may write
    $$
    F_s(x)= \sum_{\widetilde{k} \in \mathbb{Z}} f_{\widetilde{k}}(s) \widetilde{\chi}_{\widetilde{k},s}(x)
    \quad\mbox{and}\quad
    G_t(x)= \sum_{k \in \mathbb{Z}} g_{k}(t) \chi_{k,t}(x),
    $$
where $\widetilde{\chi}_{\widetilde{k},s}(x)$ is bounded by
$O(2^{-\widetilde{k}/\widetilde{r}'})$ and supported
on a set of measure $O(2^{\widetilde{k}})$,
and $\chi_{k,t}(x)$ is bounded by $O(2^{-k/r'})$ and
supported on a set of measure $O(2^{k})$.
Also, $f_{\widetilde{k}}$ and $g_{k}$ satisfy
    \begin{equation}\label{2:9-c}
    \Big( \sum_{\widetilde{k} \in \mathbb{Z}} |f_{\widetilde{k}}(s)|^{\widetilde{r}'} \Big)^{\frac{1}{\widetilde{r}'}}
    \lesssim \|F_s\|_{L^{\widetilde{r}'}_x}
    \quad \mbox{and} \quad
    \Big( \sum_{k \in \mathbb{Z}} |g_{k}(t)|^{r'} \Big)^{\frac{1}{r'}}
    \lesssim \|G_t\|_{L^{r'}_x}.
    \end{equation}
Combining \eqref{sese} and this decomposition, we now get
    \begin{equation}\label{part_1}
    \sum_{j\in\mathbb{Z}} |B_j(F,G)|
    \lesssim \sum_{j\in\mathbb{Z}} \sum_{\widetilde{k} \in \mathbb{Z}} \sum_{k \in \mathbb{Z}}
        2^{j \beta(a,b,q,\widetilde{q})} 2^{\widetilde{k}(\frac{1}{\widetilde{r}}-\frac{1}{b})} 2^{k(\frac{1}{r}-\frac{1}{a})}
        \|f_{\widetilde{k}}\|_{L_t^{\widetilde{q}'}} \|g_k\|_{L_t^{q'}}.
    \end{equation}
If $\frac{1}{a}\leq\frac{n}{n-2}\frac{1}{b}$, we use \eqref{part_1}.
But if $\frac{1}{a}>\frac{n}{n-2}\frac{1}{b}$, we use \eqref{part_1} with $2^j$ replaced by $2^{-j}$.
(Since $j \in \mathbb{Z}$, we may replace $2^j$ by  $2^{-j}$ in \eqref{part_1}.)
Then we conclude that
    $$
    \sum_{j \in \mathbb{Z}} |B_j(F,G)|
    \lesssim \sum_{j \in \mathbb{Z}} \sum_{\widetilde{k} \in \mathbb{Z}} \sum_{k \in \mathbb{Z}} H_{j,\widetilde{k}, k}(a,b)
    \|f_{\widetilde{k}}\|_{L_t^{\widetilde{q}'}} \|g_k\|_{L_t^{q'}},
    $$
where
    $$
    \begin{aligned}
    & H_{j,\widetilde{k}, k}(a,b) =
    & \left\{\begin{aligned}
    & 2^{(\widetilde{k}-\frac{n}{2}j)(\frac{1}{\widetilde{r}}-\frac{1}{b}) + (k-\frac{n}{2}j)(\frac{1}{r}-\frac{1}{a})}
        \quad\mbox{if}\quad \frac{1}{a}\leq\frac{n}{n-2}\frac{1}{b},\\
    & 2^{ \widetilde{k}(\frac{1}{\widetilde{r}}-\frac{1}{b}) + (k+(n-1)j)(\frac{1}{r}-\frac{1}{a})}
        \quad\mbox{if}\quad \frac{1}{a}>\frac{n}{n-2}\frac{1}{b}.
    \end{aligned}\right.
    \end{aligned}
    $$

    \begin{figure}[t!]\label{f4}
    \includegraphics[width=7.0cm]{./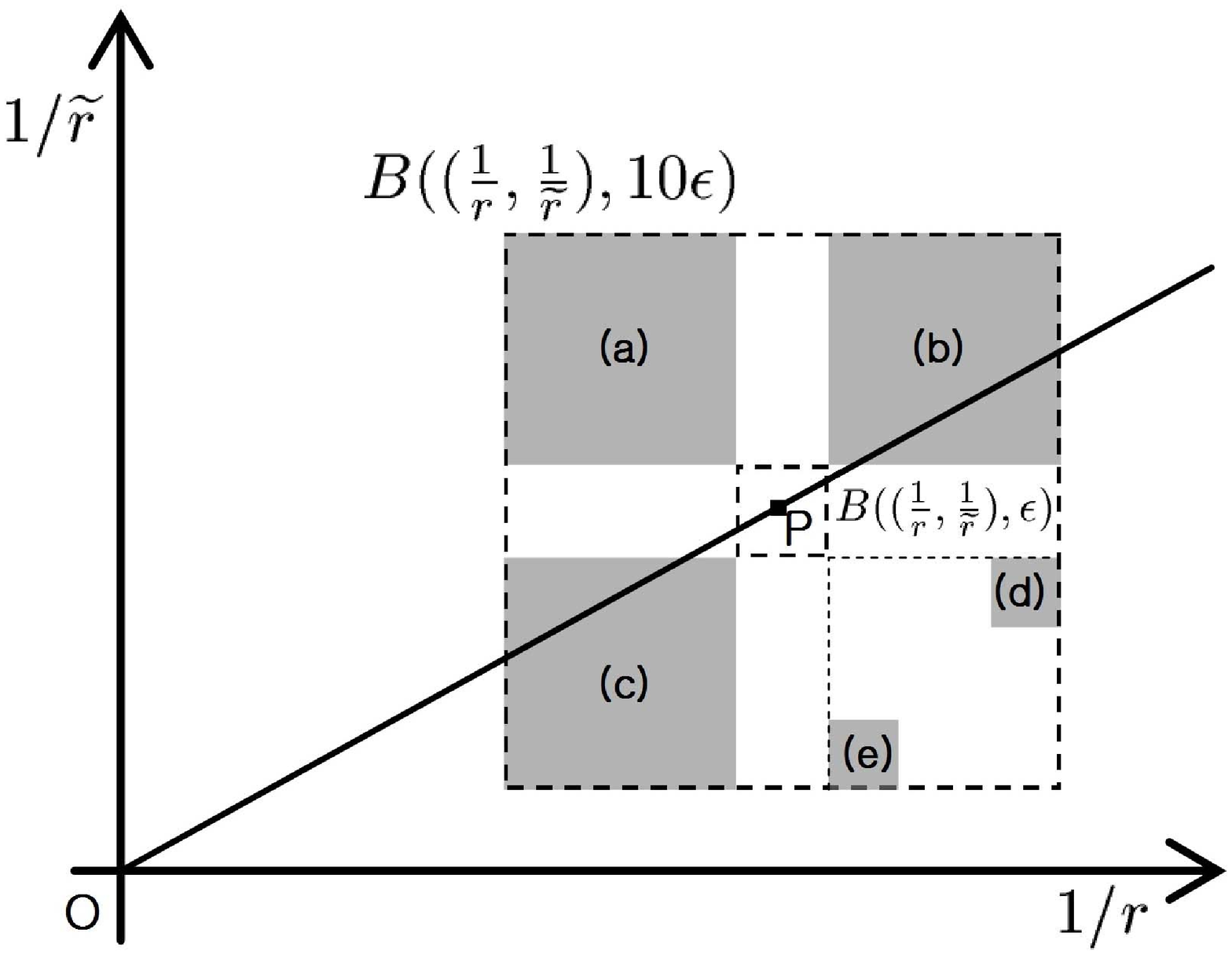}
    \caption{}
    \end{figure}

First we consider the cases where $k\neq\frac{n}{2}j$ and $\widetilde{k}\neq\frac{n}{2}j$.
Let us set
    $$
    \begin{aligned}
    & U_1 = \big\{(j,\widetilde{k},k) ; \quad k-\frac{n}{2}j >0 , \quad \widetilde{k}-\frac{n}{2}j>0 \big\}, \\
    & U_2 = \big\{(j,\widetilde{k},k) ; \quad  k-\frac{n}{2}j <0 , \quad  \widetilde{k}-\frac{n}{2}j>0 \big\}, \\
    & U_3 = \big\{(j,\widetilde{k},k) ;  \quad k-\frac{n}{2}j <0 , \quad  \widetilde{k}-\frac{n}{2}j<0 \big\}, \\
    & U_4 = \big\{(j,\widetilde{k},k) ; \quad  k-\frac{n}{2}j >0 , \quad  \widetilde{k}-\frac{n}{2}j<0 \big\}.
    \end{aligned}
    $$
Then we may write
    \begin{align}\label{2:13}
    \nonumber\sum_{j \in \mathbb{Z}} |B_j(F,G)| &\leq
    \sum_{(j,\widetilde{k},k)\in U_1 \cup U_2 \cup U_3} H_{j,\widetilde{k}, k}(a,b)\|f_{\widetilde{k}}\|_{L_t^{\widetilde{q}'}} \|g_k\|_{L_t^{q'}}\\
    &+ \sum_{(j,\widetilde{k},k)\in U_4} H_{j,\widetilde{k}, k}(a,b)\|f_{\widetilde{k}}\|_{L_t^{\widetilde{q}'}} \|g_k\|_{L_t^{q'}} .
    \end{align}
For each $(j,\widetilde{k},k) \in U_1 \cup U_2 \cup U_3$,
we choose\footnote{The line $OP$ intersects the regions $(b)$ and $(c)$ since its slope is $(n-2)/n$ (see Figure \ref{f4}).
Hence, if $(j,\widetilde{k},k)\in U_1$, choose $(\frac{1}{a},\frac{1}{b})$ that lies above the line $OP$
in the region $(b)$.
If $(j,\widetilde{k},k)\in U_2$, choose $(\frac{1}{a},\frac{1}{b})$ in the region $(a)$.
If $(j,\widetilde{k},k)\in U_3$, choose $(\frac{1}{a},\frac{1}{b})$ that lies above the line $OP$ in the region $(c)$.}
$(\frac{1}{a},\frac{1}{b}) \in B((\frac{1}{r},\frac{1}{\widetilde{r}}),10\epsilon)\setminus B((\frac{1}{r},\frac{1}{\widetilde{r}}),\epsilon) $ with $\frac{1}{a}\leq\frac{n}{n-2}\frac{1}{b}$ so that
    $$
    \begin{aligned}
    \sum_{U_1 \cup U_2 \cup U_3}
    2^{(\widetilde{k}-\frac{n}{2}j)(\frac{1}{\widetilde{r}}-\frac{1}{b}) + (k-\frac{n}{2}j)(\frac{1}{r}-\frac{1}{a})}
    &\|f_{\widetilde{k}}\|_{L_t^{\widetilde{q}'}} \|g_k\|_{L_t^{q'}} \\
\leq
    \sum_{U_1 \cup U_2 \cup U_3}
    &2^{ -\epsilon (|\widetilde{k}-\frac{n}{2}j| + |k-\frac{n}{2}j|)}
    \|f_{\widetilde{k}}\|_{L_t^{\widetilde{q}'}} \|g_k\|_{L_t^{q'}}.
    \end{aligned}
    $$
By summing in $j$, and using \eqref{2:7} and Young's inequality since $K(\cdot) = (1+|\cdot|)2^{-\epsilon|\cdot|}$ is absolutely summable,
we see that
    $$
    \begin{aligned}
\sum_{U_1 \cup U_2 \cup U_3}
    2^{ -\epsilon (|\widetilde{k}-\frac{n}{2}j| + |k-\frac{n}{2}j|)}
    &\|f_{\widetilde{k}}\|_{L_t^{\widetilde{q}'}} \|g_k\|_{L_t^{q'}}\\
\lesssim
    &\sum_{\widetilde{k} \in \mathbb{Z}} \sum_{k \in \mathbb{Z}}
    (1+|\widetilde{k}-k|)2^{-\epsilon |\widetilde{k}-k|}
    \|f_{\widetilde{k}}\|_{L_t^{\widetilde{q}'}} \|g_{k}\|_{L_t^{q'}}\\
\lesssim
     &\Big( \sum_{\widetilde{k} \in \mathbb{Z}}
    \|f_{\widetilde{k}}\|_{L_t^{\widetilde{q}'}}^{\widetilde{q}'} \Big)^{\frac{1}{\widetilde{q}'}}
    \Big( \sum_{\widetilde{k} \in \mathbb{Z}}
    \|g_{\widetilde{k}}\|_{L_t^{q'}}^{q'} \Big)^{\frac{1}{q'}} .
    \end{aligned}
    $$
Since $\widetilde{q}' \geq \widetilde{r}'$ and $q' \geq r'$, by Minkowski's inequality and \eqref{2:9-c},
    \begin{align}\label{labb}
    \nonumber\Big( \sum_{\widetilde{k} \in \mathbb{Z}}
        \|f_{\widetilde{k}}\|_{L_t^{\widetilde{q}'}}^{\widetilde{q}'} \Big)^{\frac{1}{\widetilde{q}'}}
        \Big( \sum_{\widetilde{k} \in \mathbb{Z}}
        \|g_{\widetilde{k}}\|_{L_t^{q'}}^{q'} \Big)^{\frac{1}{q'}}
    \nonumber&\lesssim
    \Big( \sum_{\widetilde{k} \in \mathbb{Z}}
        \|f_{\widetilde{k}}\|_{L_t^{\widetilde{q}'}}^{\widetilde{r}'} \Big)^{\frac{1}{\widetilde{r}'}}
        \Big( \sum_{\widetilde{k} \in \mathbb{Z}}
        \|g_{\widetilde{k}}\|_{L_t^{q'}}^{r'} \Big)^{\frac{1}{r'}} \\
    \nonumber&\lesssim
    \Big\|\Big( \sum_{\widetilde{k} \in \mathbb{Z}}
        |f_{\widetilde{k}}(s)|^{\widetilde{r}'} \Big)^{\frac{1}{\widetilde{r}'}}\Big\|_{L_t^{\widetilde{q}'}}
        \Big\|\Big( \sum_{\widetilde{k} \in \mathbb{Z}}
        |g_{\widetilde{k}}(t)|^{r'} \Big)^{\frac{q'}{r'}}\Big\|_{L_t^{q'}} \\
    &\lesssim
        \|F\|_{L^{\widetilde{q}'}_t L^{\widetilde{r}'}_x} \|G\|_{L^{q'}_t L^{r'}_x}.
    \end{align}
Consequently, we bound the first term in the right-hand side of \eqref{2:13} as desired.
To bound the second term, we first write
    $$
    \begin{aligned}
    \sum_{U_4} H_{j,\widetilde{k}, k }(a,b)
    &= \sum_{j \in \mathbb{Z}} \sum_{\widetilde{k}< \frac{n}{2}j} \sum_{k>\frac{n}{2}j}
        2^{\widetilde{k}(\frac{1}{\widetilde{r}}-\frac{1}{b}) + (k+(n-1)j)(\frac{1}{r}-\frac{1}{a})}
        \|f_{\widetilde{k}}\|_{L_t^{\widetilde{q}'}} \|g_k\|_{L_t^{q'}} \\
    &= \sum_{j \in \mathbb{Z}} 2^{j(n-1)(\frac{1}{r}-\frac{1}{a})}
        \sum_{\widetilde{k}< \frac{n}{2}j} 2^{\widetilde{k}(\frac{1}{\widetilde{r}}-\frac{1}{b}) } \|f_{\widetilde{k}}\|_{L_t^{\widetilde{q}'}}
        \sum_{k>\frac{n}{2}j} 2^{k(\frac{1}{r}-\frac{1}{a})} \|g_k\|_{L_t^{q'}}
    \end{aligned}
    $$
and note that
    $$
    \begin{aligned}
    &\sum_{\widetilde{k} <\frac{n}{2}j} 2^{\widetilde{k}(\frac{1}{\widetilde{r}}-\frac{1}{b}) } \|f_{\widetilde{k}}\|_{L_t^{\widetilde{q}'}}
        \sum_{k>\frac{n}{2}j} 2^{k(\frac{1}{r}-\frac{1}{a})} \|g_k\|_{L_t^{q'}} \\
    &\quad\leq \Big( \sum_{\widetilde{k} <\frac{n}{2}j}
        2^{\widetilde{k}(\frac{1}{\widetilde{r}}-\frac{1}{b})\widetilde{q}}\Big)^{\frac{1}{\widetilde{q}}}
        \Big( \sum_{\widetilde{k}<\frac{n}{2}j} \|f_{\widetilde{k}}\|_{L_t^{\widetilde{q}'}}^{\widetilde{q}'} \Big)^{\frac{1}{\widetilde{q}'}}
        \Big( \sum_{k>\frac{n}{2}j} 2^{k(\frac{1}{r}-\frac{1}{a})q} \Big)^{\frac{1}{q}}
        \Big( \sum_{k>\frac{n}{2}j}\|g_k\|_{L_t^{q'}}^{q'} \Big)^{\frac{1}{q'}} \\
    &\quad\lesssim2^{ \frac{n}{2}j (\frac{1}{\widetilde{r}}-\frac{1}{b}) } 2^{ \frac{n}{2}j (\frac{1}{r}-\frac{1}{a})}
        \|F\|_{L^{\widetilde{q}'}_t L^{\widetilde{r}'}_x} \|G\|_{L^{q'}_t L^{r'}_x}.
    \end{aligned}
    $$
Here we used \eqref{labb} for the last inequality.
Hence we get
    $$
    \sum_{U_4} H_{j,\widetilde{k}, k }(a,b)
    \lesssim\sum_{j \in \mathbb{Z}}
        2^{ \frac{n}{2}j (\frac{1}{\widetilde{r}}-\frac{1}{b}) }
        2^{ \frac{3n-2}{2}j (\frac{1}{r}-\frac{1}{a})}
        \|F\|_{L^{\widetilde{q}'}_t L^{\widetilde{r}'}_x} \|G\|_{L^{q'}_t L^{r'}_x}.
    $$
Now we choose\footnote{Choose $(\frac{1}{a},\frac{1}{b})$ in the region $(d)$ when $j\geq0$
and in the region $(e)$ when $j<0$ (see Figure \ref{f4}).}
$(\frac{1}{a},\frac{1}{b}) \in B((\frac{1}{r},\frac{1}{\widetilde{r}}),10\epsilon)\setminus B((\frac{1}{r},\frac{1}{\widetilde{r}}),\epsilon) $ with
$$\frac{1}{a}>\frac{n}{n-2}\frac{1}{b},
\quad -10\epsilon < \frac{1}{r}-\frac{1}{a} < -9\epsilon,
\quad 2\epsilon > \frac{1}{\widetilde{r}}-\frac{1}{b} > \epsilon$$
when $j\geq0$,
and with
$$\frac{1}{a}>\frac{n}{n-2}\frac{1}{b},
\quad-2\epsilon < \frac{1}{r}-\frac{1}{a} < -\epsilon,
\quad  10\epsilon > \frac{1}{\widetilde{r}}-\frac{1}{b} > 9\epsilon
$$
when $j<0$.
Then we get the desired bound
    $$
     \sum_{U_4} H_{j,\widetilde{k}, k}(a,b)
    \lesssim\|F\|_{L^{\widetilde{q}'}_t L^{\widetilde{r}'}_x} \|G\|_{L^{q'}_t L^{r'}_x}.
    $$
Consequently, we get \eqref{2:00}.

\medskip

Finally we consider the cases where $k=\frac{n}{2}j$ or $\widetilde{k}=\frac{n}{2}j$.
When $k=\frac{n}{2}j$, we note that
    $$
    \sum_{j \in \mathbb{Z}} \sum_{\widetilde{k} \in \mathbb{Z}} \sum_{k \in \mathbb{Z}} H_{j,\widetilde{k}, k}(a,b)
    \|f_{\widetilde{k}}\|_{L_t^{\widetilde{q}'}} \|g_k\|_{L_t^{q'}}
    = \sum_{\widetilde{k} \in \mathbb{Z}} \sum_{k \in \mathbb{Z}} 2^{(\widetilde{k}-k)(\frac{1}{\widetilde{r}}-\frac{1}{b})}
    \|f_{\widetilde{k}}\|_{L_t^{\widetilde{q}'}} \|g_k\|_{L_t^{q'}},
    $$
where $\frac{1}{a}\leq\frac{n}{n-2}\frac{1}{b}$.
Hence if we choose $(a,b)$ in the region $(a)$ or $(c)$ with $\frac{1}{a}\leq\frac{n}{n-2}\frac{1}{b}$, we see
    $$
    \sum_{\widetilde{k} \in \mathbb{Z}} \sum_{k \in \mathbb{Z}} 2^{(\widetilde{k}-k)(\frac{1}{\widetilde{r}}-\frac{1}{b})}
    \|f_{\widetilde{k}}\|_{L_t^{\widetilde{q}'}} \|g_k\|_{L_t^{q'}}
    \leq \sum_{\widetilde{k} \in \mathbb{Z}} \sum_{k \in \mathbb{Z}} 2^{-\epsilon|\widetilde{k}-k|}
    \|f_{\widetilde{k}}\|_{L_t^{\widetilde{q}'}} \|g_k\|_{L_t^{q'}} .
    $$
From this, we get the desired bound as before. The other cases follow easily in a similar way.

\section{Necessary conditions}\label{sec3}

It was shown in \cite{F} that
\begin{equation}\label{necess}
\frac{n-2}{r} - \frac{2}{q} \leq \frac{n}{\widetilde{r}}\quad\text{and}\quad\frac{n-2}{\widetilde{r}} - \frac{2}{\widetilde{q}} \leq \frac{n}{r}
\end{equation}
are the necessary conditions for which the inhomogeneous estimate
\begin{equation}\label{1:45}
\bigg\|\int_{0}^{t}e^{i(t-s)\Delta}F(\cdot,s)ds\bigg\|_{L^{q}_tL^{r}_x} \lesssim \|F\|_{L^{\widetilde{q}'}_tL^{\widetilde{r}'}_x}
\end{equation}
holds.
(We refer the reader to \cite{F,V,LS} for other necessary conditions.)
Compared with \eqref{necess}, we give here the following new necessary condition:
\begin{equation}\label{necess2}
\frac{n-2}{\widetilde{r}} - \frac{2}{q} \leq \frac{n}{r},\quad\frac{n-2}{r} - \frac{2}{\widetilde{q}} \leq \frac{n}{\widetilde{r}}.
\end{equation}
The first condition in \eqref{necess2} is stronger than the first one in \eqref{necess} when $1/r\leq1/\widetilde{r}$,
and the second condition in \eqref{necess2} is stronger than the second one in \eqref{necess} when $1/r\geq1/\widetilde{r}$.

\begin{proof}[Proof of \eqref{necess2}]
If \eqref{1:45} holds with a pair $(r,q)$ on the left and a pair $(\widetilde{r},\widetilde{q})$
on the right, then it must be also valid when one switches the roles of $(r,q)$ and $(\widetilde{r},\widetilde{q})$.
By this duality relation, we only need to show the second condition $(n-2)/r - 2/\widetilde{q} \leq n/\widetilde{r}$ in \eqref{necess2}.
For this, we first write
\begin{equation*}
    I(F)
    := \int_{0}^t e^{i(t-s)\Delta} F (\cdot,s)ds
    = (4 \pi)^{- \frac{n}{2}} \int_0^t \int_{\mathbb{R}^n} |t-s|^{- \frac{n}{2}} e^{i \frac{|x-y|^2}{4|t-s|}} F(y,s) dy ds.
    \end{equation*}
Let $0 < \epsilon < 1/2$ and
$F(y,s)=\chi_{\{0<s<\epsilon^2, |y|<\epsilon\}}$.
Since
    $$
    \frac{|x-y|^2}{4(t-s)}
    = \frac{|x|^2}{4t} + \frac{t(|x-y|^2 -|x|^2)+ s|x|^2}{4t(t-s)},
    $$
for $0<s<\epsilon^2$, $|y|<\epsilon $, $10 < t < 11$ and $\big| |x| - \frac{1}{\epsilon} \big| < \epsilon$,
we see that
    $$
    \Big| \frac{t(|x-y|^2 -|x|^2)+ s|x|^2}{4t|t-s|} \Big|
    < \frac{ 11\cdot3 +\epsilon^2\cdot\frac{2}{\epsilon^2} }{4\cdot10\cdot9}
    < \frac{1}{2}.
    $$
Hence, if $10 < t < 11$ and $\big| |x| - \frac{1}{\epsilon} \big| < \epsilon$,
    \begin{align*}
    | I (F) (x,t) |
    &\geq \bigg|(4 \pi)^{- \frac{n}{2}} e^{i\frac{|x|^2}{4t}}
        \int_0^t \int |t-s|^{- \frac{n}{2}} e^{i\frac{t(|x-y|^2 -|x|^2)+ s|x|^2}{4t(t-s)}} F(y,s) dy ds \bigg| \\
    &\gtrsim\int_0^{\epsilon^2} \int_{|y|<\epsilon} dy ds\\
    &\gtrsim \epsilon^{n+2}.
    \end{align*}
This implies that
    \begin{align*}
    \|I(F)\|_{L^{q}_{t}L^{r}_{x}}
    &\geq \|I(F)\|_{L^{q}_{t}(10 < t < 11)L^{r}_{x}(||x|-\frac{1}{\epsilon}|<\epsilon)}\\
     &\gtrsim\epsilon^{n+2}\big((1/\epsilon+\epsilon)^n - (1/\epsilon-\epsilon)^n\big)^{1/r}\\
     &\gtrsim\epsilon^{n+2}\big(\epsilon^{-n}( (1+\epsilon^2)^n - (1-\epsilon^2)^n)\big)^{1/r}\\
    &\gtrsim \epsilon^{n+2} \epsilon^{\frac{-n+2}{r}}.
    \end{align*}
On the other hand,
$\|F\|_{L^{\widetilde{q}'}_{s}L^{\widetilde{r}'}_{y}} \sim \epsilon^{\frac{2}{\widetilde{q}'}} \epsilon^{\frac{n}{\widetilde{r}'}}$.
Now the estimate \eqref{1:45} leads us to
$\epsilon^{n+2} \epsilon^{\frac{-n+2}{r}} \lesssim  \epsilon^{\frac{2}{\widetilde{q}'}} \epsilon^{\frac{n}{\widetilde{r}'}}$.
By letting $\epsilon \rightarrow0$, we conclude that $2/\widetilde{q}+ n/\widetilde{r}\geq (n-2)/r$.
\end{proof}

%%%%%%%%%%%%%%%%%%%%%%%%%%%%%%%%%%%%%%%%%%%%%%%%%%%%%%%%%%%%%%%%%%%%%%%%%%%%%%%%%%%%%%%%%%%%5

\end{document}